\definecolor{lblue}{RGB}{0,110,152}
\definecolor{dred}{RGB}{171,67,53}
\providecommand{\red}[1]{\color{red}{#1}\color{black}\hspace{0pt}}
\newtheorem{theorem}{Theorem}
\newtheorem{proposition}[theorem]{Proposition}
\newtheorem{remark}[theorem]{Remark}
\newcommand{\mendth}{\hfill \ensuremath{\vartriangle}}
\providecommand{\red}[1]{\textcolor[rgb]{0.98,0.00,0.00}{#1}}
\DeclareMathOperator{\He}{Sym}
\DeclareMathOperator*{\dom}{dom}
\DeclareMathOperator{\eps}{\varepsilon}
\newenvironment{proof}{{\it Proof :~}}{\hfill$\diamondsuit$\\}
\begin{document}



\title{Stability analysis of LPV systems with piecewise differentiable parameters}

\author{Corentin Briat and Mustafa Khammash\thanks{Corentin Briat and Mustafa Khammash are with the Department of Biosystems Science and Engineering, ETH-Z\"{u}rich, Switzerland; email: mustafa.khammash@bsse.ethz.ch, corentin.briat@bsse.ethz.ch, corentin@briat.info; url: https://www.bsse.ethz.ch/ctsb/, http://www.briat.info.}}

\date{}

\maketitle


\begin{abstract}
Linear Parameter-Varying (LPV) systems with piecewise differentiable parameters is a class of LPV systems for which no proper analysis conditions have been obtained so far. To fill this gap, we propose an approach based on the theory of hybrid systems. The underlying idea is to reformulate the considered LPV system as an equivalent hybrid system that will incorporate, through a suitable state augmentation, information on both the dynamics of the state of the system and the considered class of parameter trajectories. Then, using a result pertaining on the stability of hybrid systems, two stability conditions are established and shown to naturally generalize and unify the well-known quadratic and robust stability criteria together. The obtained conditions being infinite-dimensional, a relaxation approach based on sum of squares programming is used in order to obtain tractable finite-dimensional conditions. The approach is finally illustrated on two examples from the literature.
\end{abstract}

\section{Introduction}

Linear Parameter-Varying (LPV) systems \cite{Toth:10,Mohammadpour:12,Briat:book1} are an important class of linear systems that can be used to model linear systems that intrinsically depend on parameters \cite{Wu:95} or to approximate nonlinear systems in the objective of designing gain-scheduled controllers \cite{Shamma:88phd,Shamma:92}; see \cite{Briat:book1} for a classification attempt. They have been, since then, successfully applied to a wide variety of real-world systems such as automotive suspensions systems \cite{Poussot:08b,Poussot:10}, robotics \cite{KajiwaraAG:99a},  aperiodic sampled-data systems \cite{Robert:10}, aircrafts  \cite{Gilbert:10, Seiler:12,Pfifer:15}, etc. The field has also been enriched with very broad theoretical results and numerical tools \cite{Packard:94a,Apkarian:95,Apkarian:95a,Apkarian:98a,Wu:01,Wu:06b,Scherer:01,Bokor:05a,Scherer:12,Scherer:15,Briat:book1,Briat:15d,Balas:15,Peni:16,Wang:16}.


The objective of the paper is to extend the stability results in \cite{Briat:15d} for LPV systems with piecewise constant parameters to the case of piecewise differentiable parameters. Such parameter trajectories may arise whenever an (impulsive) LPV system is used to approximate a nonlinear impulsive system or in linear systems where parameters naturally have such a behavior. Finally, they can also be used to approximate parameter trajectories that exhibit intermittent very fast, yet differentiable, variations. Despite being similar to the case of piecewise constant parameters, the fact that the parameters are time-varying between discontinuities leads to additional difficulties which prevent from straightforwardly extending the approach developed in \cite{Briat:15d}. To overcome this, we propose a different, more general, approach based on hybrid systems theory \cite{Goebel:12}. Firstly, we equivalently reformulate the considered LPV system into a hybrid system that captures in its formulation both the dynamics of the state of the system and the considered class of parameter trajectories. Recent results from hybrid systems theory \cite{Goebel:12} combined with the use of quadratic Lyapunov functions are then applied in order to derive sufficient stability conditions for the considered hybrid systems and, hence, for the associated  LPV system and class of parameter trajectories. We then prove that the obtained stability conditions generalize and unify the well-known quadratic stability and robust stability conditions, which can be recovered as extremal/particular cases. The obtained infinite-dimensional stability conditions are reminiscent of those obtained in \cite{Briat:13d,Briat:14f,Briat:15i}, a fact that strongly confirms the connection between the approaches. To make these conditions computationally verifiable, we propose a relaxation approach based on sum of squares \cite{Putinar:93,Parrilo:00,sostools3} and semidefinite programming \cite{Sturm:01a}. The approach is finally validated through two examples.

\noindent \textbf{Outline.} The structure of the paper is as follows: in Section \ref{sec:preliminary} preliminary definitions and results are given. Section \ref{sec:stab} develops the main results of the paper. Examples are finally given in Section \ref{sec:examples}.

\noindent \textbf{Notations.} The set of nonnegative integers is denoted by $\mathbb{N}_0$. The set of symmetric matrices of dimension $n$ is denoted by  $\mathbb{S}^n$ while the cone of positive (semi)definite matrices of dimension $n$ is denoted by ($\mathbb{S}^n_{\succeq0}$) $\mathbb{S}_{\succ0}^n$. For some $A,B\in\mathbb{S}^n$, the notation that $A\succ(\succeq)B$ means that $A-B$ is positive (semi)definite. The maximum and the minimum eigenvalue of a symmetric matrix $A$ are denoted by $\lambda_{max}(A)$ and $\lambda_{min}(A)$, respectively. A function $\alpha:\mathbb{R}_{\ge0}\mapsto\mathbb{R}_{\ge0}$ is of class $\mathcal{K}_\infty$ (i.e. $\alpha\in\mathcal{K}_\infty$), if it is continuous, zero at zero, increasing and unbounded. A function $\beta:\mathbb{R}_{\ge0}\mapsto\mathbb{R}_{\ge0}$ is positive definite if $\beta(s)>0$ for all $s>0$ and $\beta(0)=0$. Given a vector $v\in\mathbb{R}^d$ and a closed set $\mathcal{A}\subset\mathbb{R}^d$, the distance of $v$ to the set $\mathcal{A}$ is denoted by $|v|_\mathcal{A}$ and is defined by $|v|_\mathcal{A}:=\inf_{y\in\mathcal{A}}|v-y|$. For any differentiable function $f(x,y)$, the partial derivatives with respect to the first and second argument evaluated at $(x,y)=(x^*,y^*)$ are denoted by $\partial_x f(x^*,y^*)$ and $\partial_y f(x^*,y^*)$, respectively.

\section{Preliminaries}\label{sec:preliminary}

\subsection{LPV systems}

LPV systems are dynamical systems that can be described as
\begin{equation}\label{eq:mainsyst}
\begin{array}{rcl}
    \dot{x}(t)&=&A(\rho(t))x(t)\\
    x(0)&=&x_0
\end{array}
\end{equation}
where $x,x_0\in\mathbb{R}^n$ are the state of the system and the initial condition, respectively. The matrix-valued function $A(\cdot)\in\mathbb{R}^{n\times n}$ is assumed to be bounded and continuous. The parameter vector trajectory $\rho:\mathbb{R}_{\ge0}\to\mathcal{P}\subset\mathbb{R}^N$, $\mathcal{P}$ compact and connected, is assumed to be piecewise differentiable with derivative in $\mathcal{D}\subset\mathbb{R}^N$, where $\mathcal{D}$ is also compact and connected. When the parameters are independent of each other, then we can assume that $\mathcal{P}$ is a box and that the following decompositions hold $\mathcal{P}=:\mathcal{P}_1\times\ldots\times\mathcal{P}_N$ where $\mathcal{P}_i:=[\underline{\rho}_i,\ \bar{\rho}_i]$, $\underline{\rho}_i\le\bar{\rho}_i$ and $\mathcal{D}=:\mathcal{D}_1\times\ldots\times\mathcal{D}_N$ where $\mathcal{D}_i:=[\underline{\nu}_i,\ \bar{\nu}_i]$, $\underline{\nu}_i\le\bar{\nu}_i$. We also define the set of vertices of $\mathcal{D}$ as $\mathcal{D}^v$; i.e. $\mathcal{D}^v:=\{\underline{\nu}_1,\ \bar{\nu}_1\}\times\ldots\{\underline{\nu}_N,\ \bar{\nu}_N\}$. 

\subsection{Hybrid systems}

Let us consider here the following hybrid system
\begin{equation}\label{eq:hybrid}
\begin{array}{rcl}
  \dot{\chi}(t)&\in&F(\chi(t))\ \textnormal{if }\chi(t)\in C\\
  \chi(t^+)&\in&G(\chi(t))\ \textnormal{if }\chi(t)\in D
  \end{array}
\end{equation}
where $\chi(t)\in\mathbb{R}^d$, $C\subset\mathbb{R}^d$ is open, $D\subset\mathbb{R}^d$ is compact and $C\subset G(D)$. The flow map and the jump map are the set-valued maps $F:C\rightrightarrows\mathbb{R}^n$ and $G:D\rightrightarrows C$, respectively. Note that the trajectories of the above system are left-continuous and the right-handed limit is given denoted by $\chi(t^+)=\lim_{s\downarrow t}\chi(s)$. It is often convenient to define a solution (or hybrid arc) $\phi$ to the above system over a hybrid time domain $\dom\phi\subset\mathbb{R}_{\ge0}\times\mathbb{Z}_{\ge0}$ where the first component denotes the usual continuous-time while the second one counts the number of jumps\footnote{See \cite{Goebel:12} for more details about the solutions of \eqref{eq:hybrid}.}. We also assume for simplicity that the solutions are complete (i.e. $\dom\phi$ is unbounded). We then have the following stability result:
 \begin{theorem}[\cite{Goebel:12}]\label{th:goebel}
   Let $\mathcal{A}\subset\mathbb{R}^d$ be closed. Assume that there exist a function $V:\bar{C}\cup D\mapsto\mathbb{R}$ that  is continuously differentiable on an open set containing $\bar C$ (i.e. the closure of $C$), functions $\alpha_1,\alpha_2\in\mathcal{K}_\infty$ and a continuous positive definite function $\alpha_3$ such that
   \begin{enumerate}[(a)]
     \item $\alpha_1(|\chi|_\mathcal{A})\le V(x)\le\alpha_2(|\chi|_\mathcal{A})$ for all $\chi\in\bar{C}\cup D$;
     \item $\langle\nabla V(\chi),f\rangle\le-\alpha_3(|\chi|_\mathcal{A})$ for all $\chi\in C$ and $f\in F(\chi)$;
    \item  $V(g)-V(\chi)\le0$ for all $\chi\in D$ and $g\in G(\chi)$.
   \end{enumerate}
    Assume further that for each $r>0$, there exists a $\gamma_r\in\mathcal{K}_\infty$ and an $N_r\ge0$ such that for every solution $\phi$ to the system \eqref{eq:hybrid}, we have that $|\phi(0,0)|_{\mathcal{A}}\in(0,r]$, $(t,j)\in\dom\phi$, $t+j\ge T$ imply $j\ge \gamma_r(T)-N_r$, then $\mathcal{A}$ is uniformly globally asymptotically stable for the system \eqref{eq:hybrid}.
 \end{theorem}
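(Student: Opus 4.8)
The plan is to establish the two defining ingredients of uniform global asymptotic stability separately — uniform stability of $\mathcal{A}$ and uniform global attractivity — and then assemble them into a single class-$\mathcal{KL}$ estimate $|\phi(t,j)|_\mathcal{A}\le\beta(|\phi(0,0)|_\mathcal{A},\,t+j)$. The workhorse throughout is that conditions (b) and (c) make $t\mapsto V(\phi(t,j))$ non-increasing along every solution $\phi$: during flows its time-derivative is $\le-\alpha_3(|\phi|_\mathcal{A})\le0$ by (b), and across jumps it does not increase by (c).

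First I would prove uniform stability. Monotonicity of $V$ gives $V(\phi(t,j))\le V(\phi(0,0))$ for all $(t,j)\in\dom\phi$, and feeding this into the sandwich bound (a) yields $\alpha_1(|\phi(t,j)|_\mathcal{A})\le V(\phi(t,j))\le V(\phi(0,0))\le\alpha_2(|\phi(0,0)|_\mathcal{A})$. Since $\alpha_1\in\mathcal{K}_\infty$ is invertible, $|\phi(t,j)|_\mathcal{A}\le(\alpha_1^{-1}\circ\alpha_2)(|\phi(0,0)|_\mathcal{A})$ with $\alpha_1^{-1}\circ\alpha_2\in\mathcal{K}_\infty$ the desired stability gain; this also shows that every sublevel set of $V$ is forward invariant and bounded, so solutions issued from a bounded set of initial conditions stay uniformly bounded.

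Next I would establish attractivity. Fix a solution with $|\phi(0,0)|_\mathcal{A}\in(0,r]$. Being non-increasing and bounded below by $0$, $V(\phi(t,j))$ converges to some $V_\infty\ge0$ as $t+j\to\infty$; I argue $V_\infty=0$ by contradiction. If $V_\infty>0$, then by (a) the solution is trapped in an annulus $0<\delta\le|\phi(t,j)|_\mathcal{A}\le M$ (the upper bound coming from stability), on which the continuous positive-definite $\alpha_3$ admits a uniform positive lower bound $c:=\min_{\delta\le s\le M}\alpha_3(s)>0$. Hence $V$ strictly decreases at rate at least $c$ per unit of flow time, while jumps never increase it; integrating, $V(\phi(t,j))\le V(\phi(0,0))-c\,t$ in the continuous-time component $t$. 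It therefore suffices to show that the flow time grows without bound along the solution, which drives $V$ negative — contradicting $V\ge0$. This is exactly where the persistence hypothesis enters: for the given $r$ it supplies $\gamma_r\in\mathcal{K}_\infty$ and $N_r\ge0$ with $j\ge\gamma_r(t+j)-N_r$ whenever $t+j\ge T$, which together with completeness of $\phi$ and the set geometry (jumps map $D$ into $C$ through $G$, and $D$ is re-entered only after flowing) rules out accumulating hybrid time purely through jumps and forces the flow time to grow unboundedly. Thus $V_\infty=0$ and, by (a), $|\phi(t,j)|_\mathcal{A}\to0$; tracking the rate uniformly in $r$ upgrades this to the class-$\mathcal{KL}$ bound.

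The main obstacle is the asymmetry between where the decrease lives and how persistence is measured: the strict decrease is produced only during flows (condition (b)), whereas the persistence hypothesis lower-bounds the number of \emph{jumps} $j$. Converting a lower bound on $j$ into unbounded accumulation of flow time is the delicate point, and it cannot be done from the Lyapunov inequalities alone — it needs the structural properties of the hybrid data (that every jump lands in the flow set $C$ and that the jump set $D$ is reached again only through flowing, which in the intended LPV reformulation is guaranteed by an enforced minimum dwell time). Once this translation is secured, the remaining steps — uniformity of the constants over initial balls and the assembly of stability and attractivity into the single $\mathcal{KL}$ estimate characterizing UGAS — are routine.
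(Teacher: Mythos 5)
The paper itself does not prove this statement: it is quoted from \cite{Goebel:12} (essentially Proposition 3.24/3.27 there combined with the Lyapunov theorem for pre-asymptotic stability), so there is no in-paper proof to compare against. Your sketch follows the standard textbook route for that result --- monotonicity of $V$ along solutions from (b)--(c), the sandwich bound (a) for uniform stability, and a compactness/annulus argument plus a persistence-of-flow argument for attractivity --- and that is the right skeleton.

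The genuine gap is precisely the step you flag yourself and then try to patch. Because the strict decrease of $V$ occurs only during flow, the annulus argument needs the ordinary-time component $t$ to grow without bound along every complete solution; a lower bound on the jump counter $j$ gives you nothing, and your attempt to convert one into the other via ``structural properties of the hybrid data'' or ``an enforced minimum dwell time'' is not available for the general system \eqref{eq:hybrid}: nothing in the stated hypotheses excludes purely discrete solutions, or solutions whose flow intervals shrink so fast that $t$ stays bounded while $j\to\infty$; such solutions satisfy $j\ge\gamma_r(T)-N_r$ trivially while $V$ never tends to zero, so the statement as literally transcribed is false and no proof can close this step. What has actually happened is that the transcription contains a typo: the side condition in the source is \emph{persistent flowing}, $t\ge\gamma_r(T)-N_r$, not $j\ge\gamma_r(T)-N_r$ (the condition with $j$ is the dual one, used when the strict decrease occurs at jumps). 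This reading is confirmed by how the theorem is invoked in the proof of Theorem \ref{th:cstDT}, where the verification consists exactly in deriving a lower bound on $t$, namely $t\ge(1+\bar{T}^{-1})^{-1}(\tilde{T}-1)$. With the hypothesis read as a bound on $t$, your argument closes with no appeal to extra structure: on the annulus $V(\phi(t,j))\le V(\phi(0,0))-c\,t$ with $t\to\infty$, a contradiction; the remaining uniformization into a class-$\mathcal{KL}$ estimate is routine, as you say.
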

The above stability result only requires the flow part of the system to be stabilizing while the jump part is only required to be non-expansive. This result is fully adapted to the analysis of LPV systems with piecewise differentiable parameters since discontinuous changes in the values of the parameters can not have any stabilizing effect -- quite the opposite. In this regard, the asymptotic stability of the system can only be ensured by the flow part of the system where parameters are smoothly varying.

\section{Main results}\label{sec:stab}

The objective of this section is to present the main results on the paper: the stability problem in the case of periodic parameter discontinuities is addressed in Section \ref{sec:cstDT} and extended to the aperiodic case in Section \ref{sec:minDT}. The results are connected to existing ones in Section \ref{sec:QR}. Finally, some computational discussions are provided in Section \ref{sec:computational}.

\subsection{Stability under constant dwell-time}\label{sec:cstDT}

In this section, we will consider the family of piecewise differentiable parameter trajectories given by
\begin{equation}
  \mathscr{P}_{\hspace{-2pt}\scriptscriptstyle{\bar{T}}}:=\left\{\begin{array}{c}
    \rho:\mathbb{R}_{\ge0}\to\mathcal{P}\left|\begin{array}{c}
      \dot{\rho}(t)\in\mathcal{Q}(\rho(t)),t\in(t_k,t_{k+1})\\
   T_k:=t_{k+1}-t_k=\bar T, k\in\mathbb{N}_{0}
    \end{array}\right.\end{array}\right\}
\end{equation}
where $\bar T>0$, $t_0=0$, and
$\mathcal{Q}(\rho)=\mathcal{Q}_1(\rho)\times\ldots\times\mathcal{Q}_N(\rho)$ with
\begin{equation}
  \mathcal{Q}_i(\rho):=\left\{\begin{array}{ccl}
    \mathcal{D}_i&&\textnormal{if }\rho_i\in(\underline{\rho}_i,\bar{\rho}_i),\\
    \mathcal{D}_i\cap\mathbb{R}_{\ge0}&&\textnormal{if }\rho_i=\underline{\rho}_i,\\
    \mathcal{D}_i\cap\mathbb{R}_{\le0}&&\textnormal{if }\rho_i=\bar{\rho}_i.
  \end{array}\right.
\end{equation}
In other words, the trajectories contained in this family can only exhibit jumps at the times $t_k=k\bar{T}$, $k>0$ (we assume that no discontinuity occurs at $t_0=0$) and hence the distance between two potential successive discontinuities -- the so-called \emph{dwell-time} -- is given by $T_k:=t_{k+1}-t_k=\bar T$ and is constant, whence the name \emph{constant dwell-time}. The associated  hybrid system is given by
\begin{equation}\label{eq:mainsystH_RDT}
\begin{array}{l}
  \left\{\left.\begin{array}{rcl}
    \dot{x}(t)&=&A(\rho(t))x(t)\\
    \dot{\rho}(t)&\in&\mathcal{Q}(\rho(t))\\
    \dot{\tau}(t)&=&1
  \end{array}\right| \textnormal{if }(x(t),\rho(t),\tau(t))\in C\right\}\\
\left\{\left.\begin{array}{rcl}
    x(t^+)&=&x(t)\\
    \rho(t^+)&\in&\mathcal{P}\\
    \tau(t^+)&=&0
  \end{array}\right|\textnormal{if }(x(t),\rho(t),\tau(t))\in D\right\}
  \end{array}
\end{equation}
where
\begin{equation}
        C := \mathbb{R}^n\times\mathcal{P}\times[0,\bar T)\ \textnormal{and}\ D :=\mathbb{R}^n\times\mathcal{P}\times\{\bar{T}\}.
\end{equation}
The initial condition for this system is chosen such that $(x(0),\rho(0),\tau(0))\in\mathbb{R}^n\times\mathcal{P}\times\{0\}$. Note, moreover, that $C\cap D=\emptyset$, $\bar{C}\cap D=D$ and $G(D)\subset C$. Hence, starting from the above initial condition, the solution of the hybrid system \eqref{eq:mainsystH_RDT} is complete (i.e. it is defined for all $t\ge0$) and it is confined in $C\cup D$.

The above hybrid system interestingly incorporates both the dynamics of the state of the system and the considered class of parameter trajectories. In addition to that, the state is also augmented to contain a clock that will measure the time elapsed since the last jump in the parameter trajectories. Indeed, starting from the chosen initial condition, the system will smoothly flow until the clock $\tau$ reaches the value $\bar T$ upon which the jump map is activated. The jump map changes the value of the parameter (thereby introducing a discontinuity in the parameter trajectories) and resets the clock, which places back the system in flow-mode. In this regard, we can easily observe that this formulation incorporates all the necessary information about the system and the parameter trajectories in order to provide accurate stability results. Such a result based on the use of a quadratic Lyapunov function is given below:
\begin{theorem}[Constant dwell-time]\label{th:cstDT}
Let $\bar T\in\mathbb{R}_{>0}$ be given and assume that there exist a bounded continuously differentiable matrix-valued function $S:[0, \bar{T}]\times\mathcal{P}\to\mathbb{S}^n_{\succ0}$ and a scalar $\eps>0$ such that the conditions
     \begin{equation}\label{eq:cst:1}
      \partial_\tau S(\tau,\theta)+\partial_\rho S(\tau,\theta)\mu+\He[S(\tau,\theta)A(\theta)]+\eps I_n\preceq0
    \end{equation}
    and
     \begin{equation}\label{eq:cst:2}
      S(0,\theta)-S(\bar{T},\eta)\preceq0
    \end{equation}
    hold for all $\theta,\eta\in\mathcal{P}$, all $\mu\in\mathcal{D}^v$ and all $\tau\in[0, \bar{T}]$.

    Then the LPV system \eqref{eq:mainsyst} with parameter trajectories in $\mathscr{P}_{\hspace{-2pt}\scriptscriptstyle{\bar{T}}}$ is asymptotically stable.\hfill\mendth
\end{theorem}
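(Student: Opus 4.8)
The plan is to apply Theorem~\ref{th:goebel} to the hybrid system~\eqref{eq:mainsystH_RDT} with the quadratic Lyapunov function candidate $V(\chi):=x^\T S(\tau,\rho)x$, where $\chi=(x,\rho,\tau)$, and the attractor $\mathcal{A}:=\{0\}\times\mathcal{P}\times[0,\bar T]$. Since every $\chi\in\bar C\cup D$ satisfies $\rho\in\mathcal{P}$ and $\tau\in[0,\bar T]$, the point $(0,\rho,\tau)$ lies in $\mathcal{A}$, whence $|\chi|_\mathcal{A}=|x|$; establishing uniform global asymptotic stability of $\mathcal{A}$ is therefore equivalent to the asymptotic stability of~\eqref{eq:mainsyst}. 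It then remains to verify conditions (a)--(c) together with the persistent-jumping hypothesis of Theorem~\ref{th:goebel}.

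For condition (a), I would invoke that $S$ is continuous and positive definite on the compact set $[0,\bar T]\times\mathcal{P}$, so there exist $0<\underline s\le\bar s$ with $\underline s I_n\preceq S(\tau,\theta)\preceq\bar s I_n$; then $\alpha_1(r):=\underline s\,r^2$ and $\alpha_2(r):=\bar s\,r^2$ lie in $\mathcal{K}_\infty$ and sandwich $V$. For condition (b), differentiating $V$ along a flow direction $f=(A(\rho)x,d,1)$ with $d\in\mathcal{Q}(\rho)$ yields
\begin{equation*}
  \langle\nabla V(\chi),f\rangle=x^\T\left[\He[S(\tau,\rho)A(\rho)]+\partial_\tau S(\tau,\rho)+\partial_\rho S(\tau,\rho)d\right]x.
\end{equation*}
The bracketed matrix is affine in $d$, and condition~\eqref{eq:cst:1} asserts (with the $\eps I_n$ margin) that it is $\preceq-\eps I_n$ at every vertex $\mu\in\mathcal{D}^v$ for all $\tau\in[0,\bar T]\supset[0,\bar T)$. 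Since $\mathcal{D}=\co(\mathcal{D}^v)$ and $\mathcal{Q}(\rho)\subseteq\mathcal{D}$, convexity in $d$ propagates the bound to every admissible flow direction, giving $\langle\nabla V(\chi),f\rangle\le-\eps|x|^2=:-\alpha_3(|x|)$ with $\alpha_3$ continuous positive definite. For condition (c), a jump maps $\chi=(x,\rho,\bar T)\in D$ to $g=(x,\eta,0)$ with $\eta\in\mathcal{P}$, so $V(g)-V(\chi)=x^\T[S(0,\eta)-S(\bar T,\rho)]x\le0$ follows directly from~\eqref{eq:cst:2}.

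Finally, the persistent-jumping hypothesis follows from the constant dwell-time structure: along any solution the clock obeys $t=j\bar T+\tau$ with $\tau\in[0,\bar T]$, so $t\le(j+1)\bar T$ and hence $t+j\ge T$ forces $j\ge(T-\bar T)/(\bar T+1)$. Choosing $\gamma_r(T):=T/(\bar T+1)\in\mathcal{K}_\infty$ and $N_r:=\bar T/(\bar T+1)$ (both independent of $r$) discharges this requirement, and Theorem~\ref{th:goebel} then delivers uniform global asymptotic stability of $\mathcal{A}$, i.e. the claim.

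The routine parts are the eigenvalue bounds in (a) and the dwell-time counting. The step requiring the most care is condition (b): one must (i) recognize that the matrix inequality is \emph{affine}, not merely linear, in the derivative variable $d$, so that testing the finitely many vertices $\mathcal{D}^v$ suffices via convexity, and (ii) observe that the state-dependent constraint set $\mathcal{Q}(\rho)$ is always contained in the full box $\mathcal{D}$, so that no separate treatment of the boundary cases in the definition of $\mathcal{Q}_i$ is needed. A secondary technical point is the regularity of $V$: one should ensure that $S$, assumed continuously differentiable on $[0,\bar T]\times\mathcal{P}$, extends to a $C^1$ function on an open neighborhood of $\bar C$ so that Theorem~\ref{th:goebel} applies verbatim.
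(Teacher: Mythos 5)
Your proposal is correct and follows essentially the same route as the paper: the same attractor $\mathcal{A}=\{0\}\times\mathcal{P}\times[0,\bar T]$, the same Lyapunov function $V=x^\T S(\tau,\rho)x$ with eigenvalue bounds for (a), the same vertex-to-convex-hull argument in the derivative variable for (b), the same reading of \eqref{eq:cst:2} for (c), and the same dwell-time counting (which you actually carry out slightly more faithfully to the hypothesis of Theorem~\ref{th:goebel}, bounding $j$ from below rather than $t$). No substantive differences to report.
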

\begin{proof}
Let $\mathcal{A}=\{0\}\times\mathcal{P}\times[0, \bar{T}]$ and note that the LPV system \eqref{eq:mainsyst} with parameter trajectories in $\mathscr{P}_{\hspace{-2pt}\scriptscriptstyle{ T}}$ is asymptotically stable if and only if $\mathcal{A}$ is asymptotically stable for the system \eqref{eq:mainsystH_RDT}. We prove now that for the choice of the Lyapunov function  $V(x,\tau,\rho):=x^TS(\tau,\rho)x$ with $S(\tau,\rho)\succ0$ for all $\tau\in[0,T_{max}]$ and all $\rho\in\mathcal{P}$, the feasibility of the conditions of Theorem \ref{th:cstDT} implies the feasibility of those in Theorem \ref{th:goebel}.

Since $S(\tau,\rho)\succ0$ for all $\tau\in[0,T_{max}]$ and all $\rho\in\mathcal{P}$, then $V(x,\tau,\rho)=0$ if and only if $x=0$ and, hence, the conditions of statement (a) of Theorem \ref{th:goebel} are verified with the functions
\begin{equation}
\begin{array}{rcl}
\alpha_1(|(x,\tau,\rho)|_\mathcal{A})&=&\min_{(\tau,\rho)\in[0,\bar{T}]\times\mathcal{P}}\lambda_{min}(S(\tau,\rho))||x||_2^2\\
\alpha_2(|(x,\tau,\rho)|_\mathcal{A})&=&\max_{(\tau,\rho)\in[0,\bar{T}]\times\mathcal{P}}\lambda_{max}(S(\tau,\rho))||x||_2^2
\end{array}
\end{equation}
which are both $\mathcal{K}_\infty$ functions.

To prove that the feasibility of the condition \eqref{eq:cst:1} implies that of statement (b) of Theorem \ref{th:goebel}, let  $\Psi_{0}(\tau,\rho,\mu)$ be the matrix on the left-hand side of \eqref{eq:cst:1} when $\eps=0$. Using the linearity in $\mu$, it is immediate to get that $\Psi_0(\tau,\rho,\mu)+\eps I_n\preceq0$ for all $(\tau,\theta,\mu)\in[0,\bar{T}]\times\mathcal{P}\times\mathcal{D}^v$ if and only if $\Psi_0(\tau,\rho,\mu)+\eps I_n\preceq0$ for all $(\tau,\theta,\mu)\in[0,\bar{T}]\times\mathcal{P}\times\mathcal{D}$. Hence, \eqref{eq:cst:1} is equivalent to saying that
\begin{equation}
\langle \nabla V(x,\tau,\rho),f\rangle=x^T\Psi_0(\tau,\rho,\mu)x\le-\eps||x||_2^2
\end{equation}
for all $(x,\tau,\theta,\mu)\in\mathbb{R}^n\times[0,\bar{T}]\times\mathcal{P}\times\mathcal{D}$. Therefore, we have that the feasibility of the condition \eqref{eq:cst:1} implies that of statement (b) of Theorem \ref{th:goebel} with $\alpha_3(|(x,\tau,\rho)|_\mathcal{A})=\eps ||x||^2_2$, which is positive definite.

Finally, the condition of statement (c) reads
 \begin{equation}
V(g)-V(x)=x^T(S(0,\theta)-S(\bar{T},\eta))x\le0
\end{equation}
and must hold for all $(x,\theta,\eta)\in\mathbb{R}^n\times\mathcal{P}\times\mathcal{P}$. This is equivalent to the condition \eqref{eq:cst:2}.  Finally, we need to check the time-domain condition. First, note that
\begin{equation}
  \dom\phi=\bigcup_{j=0}^\infty([j\bar T,(j+1)\bar T],j)
\end{equation}
which, together  with $t+j\ge \tilde T$ for some $\tilde T>0$, implies that $t+1+t/\bar T\ge \tilde T$ since $j\le 1+t/\bar T$ for all $(t,j)\in\dom\phi$. Hence, $t\ge(1+\bar{T}^{-1})^{-1}(\tilde T-1)$ and, as a result, the set $\mathcal{A}$ is asymptotically stable for the system \eqref{eq:mainsystH_RDT} and the result follows.
\end{proof}

\subsection{Stability under minimum dwell-time}\label{sec:minDT}

Let us consider now the family of piecewise differentiable parameter trajectories given by
\begin{equation}
 \mathscr{P}_{\hspace{-1mm}{\scriptscriptstyle\geqslant\bar{T}}}:=\left\{\begin{array}{c}
    \rho:\mathbb{R}_{\ge0}\to\mathcal{P}\left|\begin{array}{c}
      \dot{\rho}(t)\in\mathcal{Q}(\rho(t)),  t\in[t_k,t_{k+1})\\
        T_k:=t_{k+1}-t_k\ge\bar T,\ k\in\mathbb{N}_{0}
    \end{array}\right.\end{array}\right\}
\end{equation}
where $\bar T>0$, $t_0=0$ (we again assume that no discontinuity can occur at $t_0$). The corresponding hybrid system is given by
\begin{equation}\label{eq:mainsystH_MDT}
\begin{array}{l}
  \left\{\left.\begin{array}{rcl}
    \dot{x}(t)&=&A(\rho(t))x(t)\\
    \dot{\rho}(t)&\in&\mathcal{Q}(\rho(t))\\
    \dot{\tau}(t)&=&1\\
    \dot{T}(t)&=&0\\
  \end{array}\right| \textnormal{if }(x(t),\rho(t),\tau(t),T(t))\in C\right\}\\
\left\{\left.\begin{array}{rcl}
    x(t^+)&=&x(t)\\
    \rho(t^+)&\in&\mathcal{P}\\
    \tau(t^+)&=&0\\
   T(t^+)&\in&[\bar{T},\infty)
  \end{array}\right|\textnormal{if }(x(t),\rho(t),\tau(t),T(t))\in D\right\}
  \end{array}
\end{equation}
where
\begin{equation}
  \begin{array}{rcl}
        C&=& \mathbb{R}^n\times\mathcal{P}\times E_<,\\
        D&=&\mathbb{R}^n\times\mathcal{P}\times E_=\\
        E_\square&=&\{\varphi\in\mathbb{R}_{\ge0}\times[\bar T,\infty):\varphi_1\square\varphi_2\},\ \square\in\{<,=\}.
  \end{array}
\end{equation}
The initial condition for this system is chosen such that $(x(0),\rho(0),\tau(0),T(0))\in\mathbb{R}^n\times\mathcal{P}\times\{0\}\times [\bar T,\infty)$. This system contains an additional state compared to the system considered in Section \ref{sec:cstDT} in order to avoid the use of a time-dependent jump set (a purely technical requirement that can be relaxed). Using the current formulation, the current dwell-time is drawn each time the system jumps and the jumping condition is satisfied when $\tau(t)=T(t)$. This leads to the following result:
\begin{theorem}[Minimum dwell-time]\label{th:minDT}
Let $\bar T\in\mathbb{R}_{>0}$ be given and assume that there exist a bounded continuously differentiable matrix-valued function $S:[0, \bar{T}]\times\mathcal{P}\to\mathbb{S}^n_{\succ0}$ and a scalar $\eps>0$ such that the conditions
     \begin{equation}\label{eq:minDT:1}
      \partial_\rho S(\bar{T},\theta)\mu+\He[S(\bar{T},\theta)A(\theta)]+\eps I\preceq0
    \end{equation}
       \begin{equation}\label{eq:minDT:2}
      \partial_\tau S(\tau,\theta)+\partial_\rho S(\tau,\theta)\mu+\He[S(\tau,\theta)A(\theta)]+\eps I\preceq0
    \end{equation}
    and
     \begin{equation}\label{eq:minDT:3}
      S(0,\theta)-S(\bar{T},\eta)\preceq0
    \end{equation}
    hold for all $\theta,\eta\in\mathcal{P}$, $\mu\in\mathcal{D}^v $ and all $\tau\in[0, \bar{T}]$. Then, the LPV system \eqref{eq:mainsyst} with parameter trajectories in $\mathscr{P}_{\hspace{-1mm}{\scriptscriptstyle\geqslant \bar{T}}}$ is asymptotically stable.
\end{theorem}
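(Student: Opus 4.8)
The plan is to mirror the proof of Theorem~\ref{th:cstDT}: reduce the asymptotic stability of \eqref{eq:mainsyst} with trajectories in $\mathscr{P}_{\hspace{-1mm}{\scriptscriptstyle\geqslant \bar{T}}}$ to the stability, in the sense of Theorem~\ref{th:goebel}, of the set $\mathcal{A}=\{0\}\times\mathcal{P}\times\mathbb{R}_{\ge0}\times[\bar T,\infty)$ for the hybrid system \eqref{eq:mainsystH_MDT}, using a quadratic Lyapunov function. The new difficulty compared with the constant dwell-time case is that, during flow, the clock $\tau$ is no longer confined to $[0,\bar T)$ but may run up to the drawn value $T\ge\bar T$, which can be arbitrarily large, whereas $S$ is only defined on $[0,\bar T]\times\mathcal{P}$. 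To handle this I would \emph{freeze} the matrix beyond the dwell-time, extending $S$ to all $\tau\ge0$ via $\tilde S(\tau,\theta):=S(\min\{\tau,\bar T\},\theta)$ and setting $V(x,\rho,\tau,T):=x^{\T}\tilde S(\tau,\rho)x$. Since $\tilde S$ takes values only in the image of the compact set $[0,\bar T]\times\mathcal{P}$ under the continuous map $S$, condition~(a) of Theorem~\ref{th:goebel} follows verbatim from the argument in the proof of Theorem~\ref{th:cstDT}, with $\alpha_1,\alpha_2$ built from the extremal eigenvalues of $S$ over $[0,\bar T]\times\mathcal{P}$ and $|(x,\rho,\tau,T)|_{\mathcal{A}}=\|x\|_2$.

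The jump condition~(c) is where the definition of $\mathcal{A}$ and the freezing interact favourably: a jump is enabled only on $D$, i.e. when $\tau=T\ge\bar T$, so that $\tilde S(\tau,\eta)=S(\bar T,\eta)$ at the jump instant, while the reset sends $\tau^+=0$ and $\rho^+=\theta\in\mathcal{P}$. Hence $V(g)-V(\chi)=x^{\T}\big(S(0,\theta)-S(\bar T,\eta)\big)x$, and the reset of $T^+\in[\bar T,\infty)$ is immaterial since $V$ does not depend on $T$; condition~(c) therefore reduces exactly to \eqref{eq:minDT:3} for all $\theta,\eta\in\mathcal{P}$.

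The core of the argument is condition~(b), which I would verify by splitting the flow set according to whether $\tau$ is below or beyond the dwell-time. On $\{\tau<\bar T\}$ one has $\tilde S=S$ and $\partial_\tau\tilde S=\partial_\tau S$, so $\langle\nabla V,f\rangle=x^{\T}\big(\partial_\tau S+\partial_\rho S\,\mu+\He[SA]\big)x\le-\eps\|x\|_2^2$ by \eqref{eq:minDT:2}; on $\{\tau>\bar T\}$ one has $\partial_\tau\tilde S=0$ and $\tilde S(\tau,\cdot)=S(\bar T,\cdot)$, so the same inner product equals $x^{\T}\big(\partial_\rho S(\bar T,\rho)\,\mu+\He[S(\bar T,\rho)A(\rho)]\big)x\le-\eps\|x\|_2^2$ by \eqref{eq:minDT:1}. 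As in the proof of Theorem~\ref{th:cstDT}, the linearity of both left-hand sides in $\mu$ lets me pass from the vertices $\mathcal{D}^v$ to all of $\mathcal{D}$, in particular to $\mathcal{Q}(\rho)\subseteq\mathcal{D}$, so the bound holds for every admissible $f\in F(\chi)$. The delicate point, and the one I expect to be the main obstacle, is the loss of continuous differentiability of $\tilde S$ across the surface $\{\tau=\bar T\}$, where the $\tau$-derivative jumps from $\partial_\tau S(\bar T,\cdot)$ to $0$. I would resolve this either by invoking the locally Lipschitz, Clarke-gradient version of Theorem~\ref{th:goebel} (for which the decrease need only hold for every element of the generalized gradient, and both one-sided limits give $\preceq0$ here) or by smoothing the corner while preserving both matrix inequalities; conditions \eqref{eq:minDT:1}--\eqref{eq:minDT:2} are precisely what guarantees the flow decrease on either side of the corner.

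Finally, the persistence-of-flow (time-domain) condition is checked as in Theorem~\ref{th:cstDT}, now using the minimum dwell-time structure: since $t_0=0$ and every inter-jump interval has length at least $\bar T$, the $j$-th jump occurs at $t_j\ge j\bar T$, whence $j\le t/\bar T$ for all $(t,j)\in\dom\phi$. Combining this with $t+j\ge\tilde T$ yields $t\ge\tilde T\,\bar T/(\bar T+1)$, so the continuous time grows without bound along every complete solution; together with the strict flow decrease furnished by condition~(b), this gives the uniform global asymptotic stability of $\mathcal{A}$ for \eqref{eq:mainsystH_MDT}, and hence the asymptotic stability of the LPV system \eqref{eq:mainsyst} with trajectories in $\mathscr{P}_{\hspace{-1mm}{\scriptscriptstyle\geqslant \bar{T}}}$.
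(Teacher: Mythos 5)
Your proposal follows essentially the same route as the paper: the paper's proof uses exactly the same ``frozen'' Lyapunov function, defined as $x^{\T}S(\tau,\rho)x$ for $\tau\le\bar T$ and $x^{\T}S(\bar T,\rho)x$ for $\tau>\bar T$, and then declares the remaining verification analogous to the constant dwell-time case. Your write-up is in fact more complete than the paper's, since you explicitly carry out conditions (a)--(c) and the time-domain check, and you flag (and propose a fix for) the loss of continuous differentiability of $V$ across $\{\tau=\bar T\}$, a technical point the paper's one-line proof silently passes over even though Theorem~\ref{th:goebel} as stated requires $V$ to be $C^1$ on a neighborhood of $\bar C$.
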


\begin{proof}
Assume that the full trajectory of $T(t)$ is known and such that $T(t)\ge\bar T$ for all $t\ge0$. Note that this is possible since $T(t)$ is independent of the rest of the state of the hybrid system \eqref{eq:mainsystH_MDT}. Then, there exists a $T_{max}<\infty$ such that $\bar{T}\le T(t)\le T_{max}$ for all  $t\ge0$ and define the set $\mathcal{A}=\{0\}\times\mathcal{P}\times((E_<\cup E_=)\cap[0,T_{max}]^2)$. Let us consider here the following Lyapunov function
  \begin{equation}
    V(x,\tau,\rho)=\left\{\begin{array}{lcl}
      x^TS(\tau,\rho)x&&\textnormal{if }\tau\le\bar{T},\\
      x^TS(\bar{T},\rho)x&&\textnormal{if }\tau>\bar{T}.
    \end{array}\right.
  \end{equation}
  The rest of the proof is analogous to that of Theorem \ref{th:cstDT} and it thus omitted.
\end{proof}

\subsection{Connection with quadratic and robust stability}\label{sec:QR}

Interestingly, it can be shown that the minimum dwell-time result stated in Theorem \ref{th:minDT} naturally generalizes and unifies the quadratic and robust stability conditions through the concept of minimum dwell-time. This is further explained in the results below:
\begin{proposition}[Quadratic stability]
  When $\bar T\to0$ and $t_k\to\infty$ as $k\to\infty$, then the conditions of Theorem \ref{th:minDT} are equivalent to saying that there exists a matrix $P\in\mathbb{S}_{\succ0}^n$ such that
  \begin{equation}\label{eq:quadstab}
        A(\theta)^TP+PA(\theta)\prec0
  \end{equation}
  for all $\theta\in\mathcal{P}$.
\end{proposition}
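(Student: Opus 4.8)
The plan is to prove the equivalence by establishing the two implications separately, the reverse direction (sufficiency of quadratic stability) being essentially immediate and the forward direction exploiting the degeneration of the dwell-time conditions as $\bar T\to0$. The guiding intuition is that letting $\bar T\to0$ removes the dwell-time constraint altogether, so the admissible parameter trajectories become arbitrarily fast switching signals, which is precisely the regime in which quadratic (switching-independent) stability is the natural criterion.

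For the direction ``quadratic stability $\Rightarrow$ the conditions of Theorem \ref{th:minDT}'', I would take the clock- and parameter-independent candidate $S(\tau,\theta)\equiv P$. Then $\partial_\tau S=0$ and $\partial_\rho S=0$, so \eqref{eq:minDT:1} and \eqref{eq:minDT:2} both collapse to $\He[PA(\theta)]+\eps I\preceq0$, while \eqref{eq:minDT:3} reads $P-P=0\preceq0$ and holds trivially. The only point to verify is the existence of a suitable $\eps>0$: since $A(\theta)^TP+PA(\theta)\prec0$ on the compact set $\mathcal{P}$ and $A$ is continuous, the map $\theta\mapsto\lambda_{max}(A(\theta)^TP+PA(\theta))$ is continuous and attains a strictly negative maximum, so any $\eps$ below its absolute value works. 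This choice is valid for every $\bar T$, in particular in the limit $\bar T\to0$.

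For the converse, I would use that the flow interval $[0,\bar T]$ shrinks to the single point $\{0\}$ as $\bar T\to0$. Passing to the limit in \eqref{eq:minDT:3} gives $S(0,\theta)-S(0,\eta)\preceq0$ for all $\theta,\eta\in\mathcal{P}$; exchanging the roles of $\theta$ and $\eta$ yields the reverse inequality, forcing $S(0,\cdot)$ to be independent of the parameter, and I would set $P:=S(0,\theta)\succ0$. Because $S$ is continuously differentiable and $S(0,\cdot)$ is constant, its parameter-gradient vanishes, $\partial_\rho S(0,\theta)=0$, and by continuity $S(\bar T,\theta)\to P$ and $\partial_\rho S(\bar T,\theta)\to0$ as $\bar T\to0$. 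Passing to the limit in \eqref{eq:minDT:1} then annihilates the term $\partial_\rho S(\bar T,\theta)\mu$ for every $\mu\in\mathcal{D}^v$ and leaves $\He[PA(\theta)]+\eps I\preceq0$, which is exactly \eqref{eq:quadstab}.

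The main obstacle I anticipate is making the passage $\bar T\to0$ rigorous rather than heuristic, in particular arguing that $\partial_\rho S(\bar T,\theta)$ genuinely vanishes in the limit and that condition \eqref{eq:minDT:2}, which carries the clock-derivative $\partial_\tau S$ on the degenerating interval, does not obstruct feasibility. The cleanest route is to interpret the limiting conditions directly at $\bar T=0$: there $S$ reduces to a function of $\theta$ alone, \eqref{eq:minDT:3} forces it to be constant, \eqref{eq:minDT:1} becomes the quadratic Lyapunov inequality, and \eqref{eq:minDT:2} drops out since the clock $\tau$ no longer flows. This sidesteps having to track a sequence of functions $S_{\bar T}$ together with uniform bounds on their $\tau$-derivatives.
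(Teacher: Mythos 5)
Your proposal is correct and follows essentially the same route as the paper: the key step in both is to use condition \eqref{eq:minDT:3} with the roles of $\theta$ and $\eta$ exchanged to force $S(0,\cdot)$ to be parameter-independent, after which \eqref{eq:minDT:1}--\eqref{eq:minDT:2} collapse to the quadratic Lyapunov inequality, and the converse is obtained by taking $S\equiv P$. The only cosmetic difference is that the paper organizes the limit argument via first-order Taylor expansions in $\epsilon=\bar T$ (also extracting $\partial_\tau S(0,\theta)\succeq0$ along the way), whereas you argue directly on the zeroth-order terms; both share the same level of informality in treating the $\bar T$-dependence of $S$.
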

\begin{proof}
First note that since $\bar T>0$ and $t_k\to\infty$ as $k\to\infty$, then the solution of the hybrid system is complete. Let $\bar T=\epsilon>0$, then we have that
\begin{equation}
  \begin{array}{rcl}
    S(0,\theta)-S(\epsilon,\theta)&=&-\epsilon \partial_\tau S(0,\theta)+o(\epsilon),\\
    S(0,\theta)-S(\epsilon,\eta)&=&(S(0,\theta)-S(0,\eta))-\epsilon \partial_\tau S(0,\eta)+o(\epsilon),\\
    S(0,\eta)-S(\epsilon,\theta)&=&(S(0,\eta)-S(0,\theta))-\epsilon \partial_\tau S(0,\theta)+o(\epsilon)
  \end{array}
\end{equation}
where it is assumed that $\eta\ne\theta$ and where $o(\epsilon)$ is the little-o notation. From \eqref{eq:minDT:3}, we get that the first equation implies that $\partial_\tau S(0,\theta)\succeq0$. Therefore, for the second expression to be negative semidefinite for any arbitrarily small $\epsilon>0$, we need that $S(0,\theta)-S(0,\eta)$ be negative semidefinite. However, this contradicts the last one and hence we need that $S(0,\eta)=S(0,\theta)$; i.e. $S$ is independent of $\rho$. Finally, since $\epsilon>0$ is arbitrarily small, then both \eqref{eq:minDT:1}-\eqref{eq:minDT:2} can be satisfied with a matrix-valued function $S$ that is independent of $\tau$. Hence, we need that $S(\tau,\theta):=P\succ0$ and substituting it in  \eqref{eq:minDT:1}-\eqref{eq:minDT:2} yield the quadratic stability condition \eqref{eq:quadstab}.
\end{proof}

\begin{proposition}[Robust stability]
  When $\bar T\to\infty$, then the conditions of Theorem \ref{th:minDT} are equivalent to saying that there exists a differentiable matrix-valued function $P:\mathcal{P}\mapsto\mathcal{S}_{\succ0}^n$ such that
  \begin{equation}\label{eq:robstab}
      \partial_\rho P(\theta)\mu+A(\theta)^TP(\theta)+P(\theta)A(\theta)\prec0
  \end{equation}
  for all $\theta\in\mathcal{P}$ and all $\mu\in\mathcal{D}^v $.
\end{proposition}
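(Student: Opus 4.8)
The plan is to establish the stated equivalence (of feasibility of the conditions in Theorem~\ref{th:minDT} and feasibility of \eqref{eq:robstab}) by proving the two implications separately, observing at the outset that the necessity direction is essentially immediate from \eqref{eq:minDT:1} and holds for every finite $\bar T$, whereas the sufficiency direction is where the regime $\bar T\to\infty$ genuinely enters.

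For necessity, I would suppose that $S$ satisfies \eqref{eq:minDT:1}--\eqref{eq:minDT:3} and simply set $P(\theta):=S(\bar T,\theta)$. This $P$ is continuously differentiable and positive definite because $S$ is, and since $\bar T$ is fixed we have $\partial_\rho P(\theta)=\partial_\rho S(\bar T,\theta)$. Substituting into \eqref{eq:minDT:1} gives $\partial_\rho P(\theta)\mu+\He[P(\theta)A(\theta)]+\eps I\preceq0$, i.e. $\partial_\rho P(\theta)\mu+A(\theta)^TP(\theta)+P(\theta)A(\theta)\preceq-\eps I\prec0$, which is exactly \eqref{eq:robstab}. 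I would remark that \eqref{eq:minDT:1} is precisely the flow condition in the frozen regime $\tau\ge\bar T$, so this direction reflects that minimum dwell-time stability subsumes the no-jump (continuous, rate-bounded) trajectories.

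For sufficiency I would proceed constructively. Given $P$ satisfying \eqref{eq:robstab}, compactness of $\mathcal{P}\times\mathcal{D}^v$ and continuity yield a $\delta>0$ with $\partial_\rho P(\theta)\mu+\He[P(\theta)A(\theta)]\preceq-\delta I$, and compactness of $\mathcal{P}$ gives $0\prec P(\theta)\preceq\bar p I$ for some $\bar p$. I would then look for $S$ of the separable form $S(\tau,\theta)=g(\tau)P(\theta)$ with a smooth increasing scalar profile $g:[0,\bar T]\to(0,1]$ satisfying $g(\bar T)=1$, so that \eqref{eq:minDT:1} reduces to \eqref{eq:robstab} at once. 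The jump condition \eqref{eq:minDT:3} then becomes $g(0)P(\theta)\preceq P(\eta)$ for all $\theta,\eta$; I would secure it by choosing $g(0)=\lambda$ with $\lambda\in(0,\lambda^*]$, where $\lambda^*:=\inf_{\theta,\eta\in\mathcal{P}}\lambda_{min}(P(\theta)^{-1/2}P(\eta)P(\theta)^{-1/2})>0$ is positive by compactness and positive definiteness. Taking $g$ linear, $g(\tau)=\lambda+(1-\lambda)\tau/\bar T$, the flow condition \eqref{eq:minDT:2} reads $g'(\tau)P(\theta)+g(\tau)\left(\partial_\rho P(\theta)\mu+\He[P(\theta)A(\theta)]\right)+\eps I\preceq0$, which is bounded above by $\left(\bar p/\bar T-\lambda\delta+\eps\right)I$. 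Choosing $\eps=\lambda\delta/2$ and any $\bar T\ge 2\bar p/(\lambda\delta)$ makes this negative semidefinite, so all three conditions hold.

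The main obstacle, and the only place where $\bar T\to\infty$ is used, is the flow condition \eqref{eq:minDT:2}: the interpolation forces $S$ to grow from $\lambda P$ to $P$, contributing the destabilizing term $g'(\tau)P(\theta)\succeq0$, and this term can only be dominated by the strict margin $-\lambda\delta I$ once its magnitude $O(\bar p/\bar T)$ is small, i.e. once $\bar T$ is large. The accompanying technical point is establishing $\lambda^*>0$, which is what lets a single scalar $\lambda$ simultaneously enforce \eqref{eq:minDT:3} for all pairs $(\theta,\eta)$; this is where boundedness of $P$ both above and below on the compact set $\mathcal{P}$ is essential.
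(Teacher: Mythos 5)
Your proof is correct, but it takes a genuinely different and more quantitative route than the paper's. The paper's own argument is a two-line limit argument: when $\bar T\to\infty$ no jumps ever occur, so condition \eqref{eq:minDT:3} can simply be dropped, and choosing $S(\tau,\theta)=P(\theta)$ makes \eqref{eq:minDT:1}--\eqref{eq:minDT:2} collapse to \eqref{eq:robstab}. You instead split the equivalence into two implications and, crucially, keep $\bar T$ finite throughout: necessity via $P(\theta):=S(\bar T,\theta)$ (which, as you note, holds for every $\bar T>0$, not only in the limit), and sufficiency via the explicit separable construction $S(\tau,\theta)=g(\tau)P(\theta)$ with a linear ramp $g$ from $\lambda$ to $1$, where $\lambda\le\lambda^*$ enforces the jump condition and the destabilizing term $g'(\tau)P(\theta)=O(\bar p/\bar T)$ is absorbed by the strict margin $-\lambda\delta I$ once $\bar T\ge 2\bar p/(\lambda\delta)$. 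What your approach buys is a rigorous replacement for the informal ``no jumps anymore'' step together with an explicit finite dwell-time threshold beyond which the conditions of Theorem \ref{th:minDT} are feasible whenever \eqref{eq:robstab} is; what the paper's approach buys is brevity, at the cost of leaving the limit $\bar T\to\infty$ unformalized. Both arguments are sound; yours is the more complete one.
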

\begin{proof}
Clearly, when $\bar T\to\infty$, then there are no jumps anymore and hence we can remove the condition \eqref{eq:minDT:3} as it never occurs. Consequently, we can choose $S(\tau,\theta)=P(\theta)$ and the conditions \eqref{eq:minDT:1}-\eqref{eq:minDT:2}  immediately reduce to \eqref{eq:robstab}.
\end{proof}

\subsection{Computational considerations}\label{sec:computational}

The conditions formulated in Theorem \ref{th:cstDT} and Theorem \ref{th:minDT} are infinite-dimensional semidefinite programs that are intractable per se. To make them tractable, we propose to consider an approach based on sum of squares programming \cite{Parrilo:00} that will result in a approximate finite-dimensional semidefinite program that can be solved using standard semidefinite programming solvers such as SeDuMi \cite{Sturm:01a}. The relaxation procedure can be performed with the help of the package SOSTOOLS \cite{sostools3} This conversion to an SOS program is described below.

Assuming that $\mathcal{P}$ is a compact semialgebraic set, then it can defined as
\begin{equation}
  \mathcal{P}=\left\{\theta\in\mathbb{R}^N: g_i(\theta)\ge0, i=1,\ldots,M \right\}
\end{equation}
where the functions $g_i:\mathbb{R}^N\mapsto\mathbb{R}$, $i=1,\ldots,M $ are polynomial. We further have that
\begin{equation}
  [0, \bar{T}]=\left\{\tau\in\mathbb{R}:\ f(\tau):=\tau( \bar{T}-\tau)\ge0\right\}.
\end{equation}
In what follows, we say that a symmetric matrix-valued function $\Theta(\cdot)$ is a sum of squares matrix (SOS matrix) if there exists a matrix $\Xi(\cdot)$ such that $\Theta(\cdot)=\Xi(\cdot)^{T}\Xi(\cdot)$. The following result provides the sum of squares formulation of Theorem \ref{th:cstDT}:
\begin{proposition}\label{prog:periodic}
  Let $\eps, \bar{T}>0$ be given and  assume that the sum of squares program

{\vspace{4mm}}
\noindent\fbox{
\parbox{\textwidth}{
    Find polynomial matrices
      \begin{equation*}
        \begin{array}{rcl}
    S,\Gamma_1,\Gamma_2,\Gamma_1^i,\Gamma_2^i&:&[0,\bar{T}]\times\mathcal{P}\mapsto\mathbb{S}^n,\\
    \Gamma_3^i,\Gamma_4^i&:&\mathcal{P}\times\mathcal{P}\mapsto\mathbb{S}^n,\\
    \Upsilon_1^i,\Upsilon_2^i&:&\mathcal{D}^v \times [0,\bar{T}]\times\mathcal{P}\mapsto\mathbb{S}^n
    \end{array}
      \end{equation*}
      where $i=1,\ldots,M$ and such that
      \begin{itemize}
        \item $\Gamma_1,\Gamma_2,\Gamma_1^i,\Gamma_2^i,\Gamma_3^i,\Gamma_4^i,\Upsilon_1^i,\Upsilon_2^i$, $i=1,\ldots,M$, are SOS matrices for all $\mu\in\mathcal{D}^v$,
        \item $S(\tau,\theta)-\sum_{i=1}^{M }\Gamma_1^i(\theta)g_i(\theta)-\eps I_n$ is an SOS matrix
        \item $-\partial_\rho S(\tau,\theta)\mu-\partial_\tau S(\tau,\theta)-\He[S(\tau,\theta)A(\theta)]-\sum_{i=1}^{M }\Upsilon_1^i(\tau,\theta,\mu)(\tau,\theta)g_i(\theta)$

        $\qquad\qquad\qquad\qquad\qquad\qquad\qquad-\Upsilon_2(\tau,\theta,\mu)\tau( T-\tau)$ is an SOS matrix for all $\mu\in\mathcal{D}^v $
        \item $ S( T,\eta)-S(0,\theta)-\eps I_n-\sum_{i=1}^{M }\Gamma_3^i(\theta,\eta)g_i(\theta) -\sum_{i=1}^{M }\Gamma_4^i(\theta,\eta)g_i(\eta)$ is an SOS matrix.
    \end{itemize}}}

is feasible. Then, the conditions of Theorem \ref{th:cstDT} hold with the computed polynomial matrix $S(\tau,\theta)$ and the system \eqref{eq:mainsyst} is asymptotically stable for all $\rho\in\mathscr{P}_{\hspace{-2pt}\scriptscriptstyle{\bar T}}$.
\end{proposition}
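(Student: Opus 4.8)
The plan is to show that feasibility of the sum of squares program forces each of the three hypotheses of Theorem~\ref{th:cstDT} -- positive definiteness of $S$, the flow inequality \eqref{eq:cst:1}, and the jump inequality \eqref{eq:cst:2} -- to hold pointwise on the appropriate sets, after which the conclusion is immediate by a direct appeal to Theorem~\ref{th:cstDT}. The single engine driving every step is the elementary fact that a sum of squares matrix $\Theta=\Xi^{\transp}\Xi$ satisfies $\Theta(\cdot)\succeq0$ everywhere on its domain. Combined with the requirement that the multiplier matrices $\Gamma_\bullet^i$ and $\Upsilon_\bullet^i$ are themselves SOS (hence pointwise positive semidefinite), this furnishes a matrix-valued Putinar-type certificate: a term of the form $\sum_i\Gamma^i(\cdot)g_i(\cdot)$ or $\Upsilon(\cdot)f(\tau)$ with $f(\tau)=\tau(\bar T-\tau)$, being a product of a positive semidefinite matrix and a nonnegative generator polynomial, is positive semidefinite precisely on the set where that generator is nonnegative, namely $\mathcal{P}=\{g_i\ge0\}$ and $[0,\bar T]=\{f\ge0\}$.

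With this in hand I would dispatch the three conditions by simple evaluation on the relevant sets. For positivity, evaluating the item $S-\sum_i\Gamma_1^i g_i-\eps I_n$ on $\mathcal{P}$ gives $S(\tau,\theta)\succeq\eps I_n+\sum_i\Gamma_1^i(\theta)g_i(\theta)\succeq\eps I_n\succ0$ for all $(\tau,\theta)\in[0,\bar T]\times\mathcal{P}$, so $S$ indeed takes values in $\mathbb{S}^n_{\succ0}$. For the flow inequality, fix a vertex $\mu\in\mathcal{D}^v$ and restrict to $(\tau,\theta)\in[0,\bar T]\times\mathcal{P}$, where $f(\tau)\ge0$ and $g_i(\theta)\ge0$; the subtracted multiplier terms are then $\succeq0$, and the SOS property of the whole matrix forces \eqref{eq:cst:1}, the positive-definite margin being carried by the $\eps I_n$ slack of the certificate exactly as in the other two items. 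The reduction to the vertices $\mathcal{D}^v$ in place of all of $\mathcal{D}$ is harmless because, as already established in the proof of Theorem~\ref{th:cstDT}, the left-hand side is affine in $\mu$, so vertex feasibility is equivalent to feasibility over the whole box $\mathcal{D}$. Finally, evaluating the jump item on $\mathcal{P}\times\mathcal{P}$ (so that $g_i(\theta)\ge0$ and $g_i(\eta)\ge0$) yields $S(\bar T,\eta)-S(0,\theta)\succeq\eps I_n\succeq0$, which is \eqref{eq:cst:2}. Collecting the three conclusions, all hypotheses of Theorem~\ref{th:cstDT} hold with the computed $S$, and asymptotic stability of \eqref{eq:mainsyst} over $\mathscr{P}_{\hspace{-2pt}\scriptscriptstyle{\bar T}}$ follows.

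Since the argument is really one-directional bookkeeping, I do not anticipate a genuine obstacle; the one point demanding care is the logical direction of the Positivstellensatz step. An SOS decomposition is sufficient but not necessary for the underlying matrix inequality, so feasibility of the program implies the conditions of Theorem~\ref{th:cstDT} but not conversely -- the proposition is a relaxation and the proof must run only in the asserted direction. A secondary bookkeeping point is to ensure that every multiplier carrying a dependence on $\mu$ is quantified over all $\mu\in\mathcal{D}^v$, as the statement requires, so that the vertex argument is applied consistently to the flow inequality and its certificate.
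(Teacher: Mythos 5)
Your proof is correct and is exactly the standard matrix-Positivstellensatz argument (SOS $\Rightarrow$ pointwise positive semidefinite, multipliers times nonnegative generators discarded on the semialgebraic set) that the paper leaves entirely implicit --- no proof of Proposition~\ref{prog:periodic} is given in the paper. One small point worth noting: the third SOS constraint as printed carries no $-\eps I_n$ slack, so strictly read it only certifies \eqref{eq:cst:1} with $\eps=0$; this is evidently a typographical omission (the analogous constraint for the minimum dwell-time case in the subsequent remark does include $-\eps I_n$), and your reading of the certificate as carrying the $\eps I_n$ margin matches the intended statement rather than the literal one.
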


\begin{remark}
The conditions of Theorem \ref{th:minDT} can be checked by simply adding  to the SOS program of Proposition \ref{prog:periodic} the constraints
\begin{itemize}
  \item $ \Upsilon_3^i(\tau,\theta,\mu)$, $i=1,\ldots,M$,  are SOS matrices for all $\mu\in\mathcal{D}^v $
  \item $ -\partial_\rho S(\bar{T},\theta)\mu-\He[S( T,\theta)A(\theta)]-\sum_{i=1}^{M }\Upsilon_3^i(\tau,\theta,\mu)g_i(\theta)-\eps I_n$  is an SOS matrix for all $\mu\in\mathcal{D}^v $
\end{itemize}
where $\Upsilon_3^i:\mathcal{D}^v \times [0,\bar{T}]\times\mathcal{P}\mapsto\mathbb{S}^n$, $i=1,\ldots,M$, are additional polynomial variables.
\end{remark}

\begin{remark}
  When the parameter set $\mathcal{P}$ is also defined by equality constraints $h_i(\theta)=0$, $i=1,\ldots, M'$, these constraints can be simply added in the sum of squares programs in the same way as the inequality constraints, but with the particularity that the corresponding multiplier matrices be simply symmetric instead of being symmetric SOS.
\end{remark}


\section{Examples}\label{sec:examples}

We consider now two examples. The first one is a 2-dimensional toy example considered in \cite{Xie:97} whereas the second one is a 4-dimensional system considered in \cite{Wu:95} and inspired from an automatic flight control design problem. 

\subsection{Example 1}

Let us consider the system \eqref{eq:mainsyst} with the matrix  \cite{Xie:97,Briat:15d}
\begin{equation}\label{eq:ex1}
  A(\rho)=\begin{bmatrix}
    0 & &1\\
    -2-\rho &\ &-1
  \end{bmatrix}
\end{equation}
where the time-varying parameter $\rho(t)$ takes values in $\mathcal{P}=[0,\bar{\rho}]$, $\bar{\rho}>0$. It is known \cite{Xie:97} that this system is quadratically stable if and only if $\bar{\rho}\le3.828$ but it is was later proven in the context of piecewise constant parameters \cite{Briat:15d} that this bound can be improved provided that discontinuities do not occur too often. We now apply the conditions of Theorem \ref{th:cstDT} and Theorem \ref{th:minDT} in order to characterize the impact of parameter variations between discontinuities. To this aim, we consider that $|\dot{\rho}(t)|\le\nu$ with $\nu\ge0$ and that $\bar{\rho}\in\{0,0.1,\ldots,10\}$. For each value for the upper-bound $\bar{\rho}$ in that set, we solve for the conditions Theorem \ref{th:cstDT} and Theorem \ref{th:minDT}  to get estimates (i.e. upper-bounds) for the minimum stability-preserving constant and minimum dwell-time. We use here $\eps_1=\eps_2=\eps_3=0.01$ and polynomials of degree 4 in the sum of squares programs. Note that we have, in this case, $M=1$, $M'=0$ and $g_1(\theta)=\theta(\bar{\rho}-\theta)$. The complexity of the approach can be evaluated here through the number of primal/dual variables of the semidefinite program which is 2209/273 in the constant dwell-time case and 2409/315 in the minimum dwell-time case. Time-wise, the average preprocessing/solving time is given by 4.83/0.89 sec in the constant dwell-time case and 6.04/1.25 sec in the minimum dwell-time case. The results are depicted in Fig.~\ref{fig:1} and Fig.~\ref{fig:2} where we can see that the obtained minimum values for the dwell-times increase with the rate of variation $\nu$ of the parameter, which is an indicator of the fact that increasing the rate of variation of the parameter destabilizes the system and, consequently, the dwell-time needs to increase in order to preserve the overall stability of the system. For this example, it is interesting to note that the constant and minimum dwell-time curves seem to converge to each other when $\nu$ increases which could indicate that when the parameter is fast-varying between jumps then the aperiodicity of jumps do not affect much the stability of the system.

\begin{figure}
  \centering
  \includegraphics[width=0.65\textwidth]{./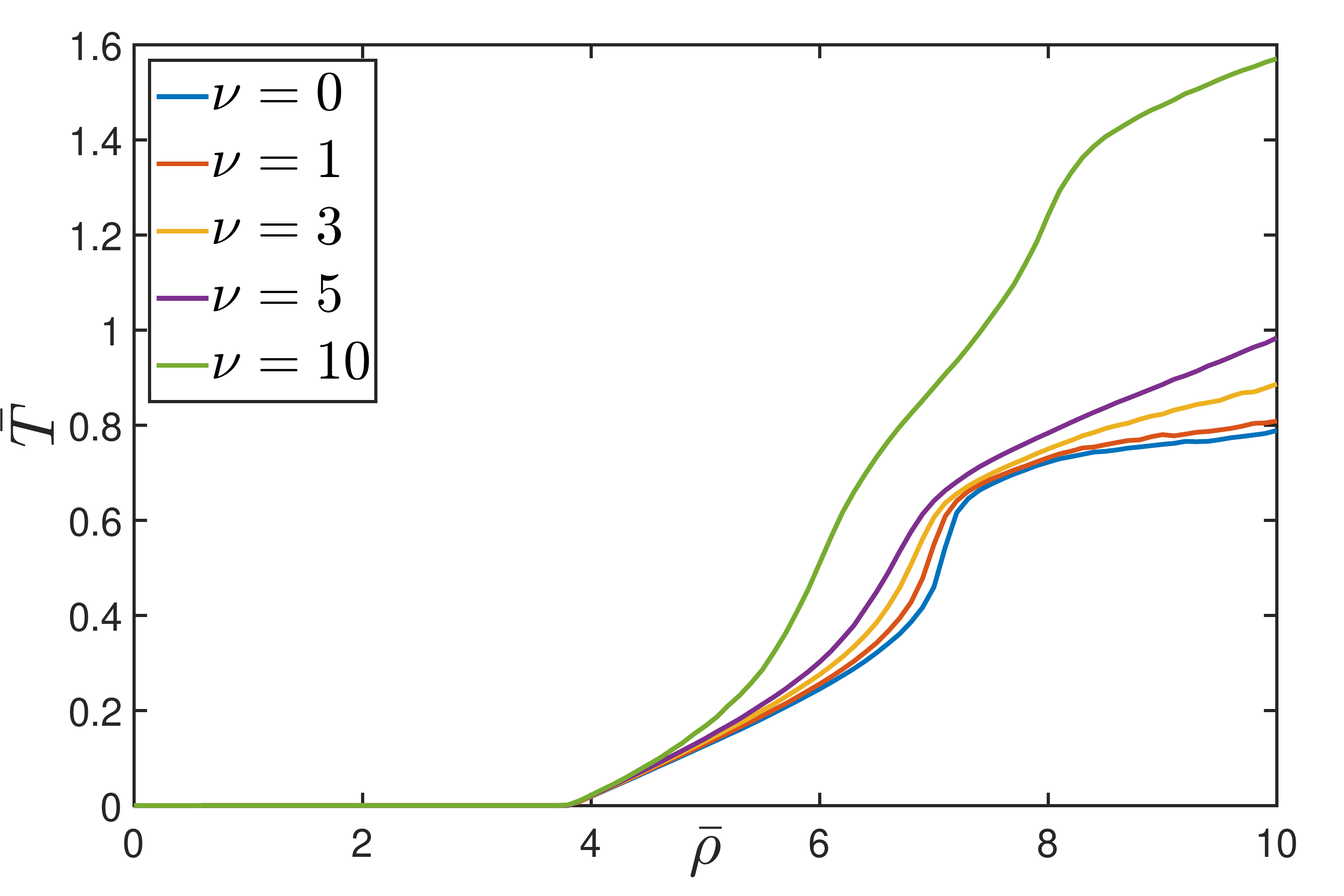}
  \caption{Evolution of the computed minimum upper-bound on the constant dwell-time using Theorem \ref{th:cstDT} for the system \eqref{eq:mainsyst}-\eqref{eq:ex1} using an SOS approach with polynomials of degree 4.}\label{fig:1}
\end{figure}

\begin{figure}
  \centering
  \includegraphics[width=0.65\textwidth]{./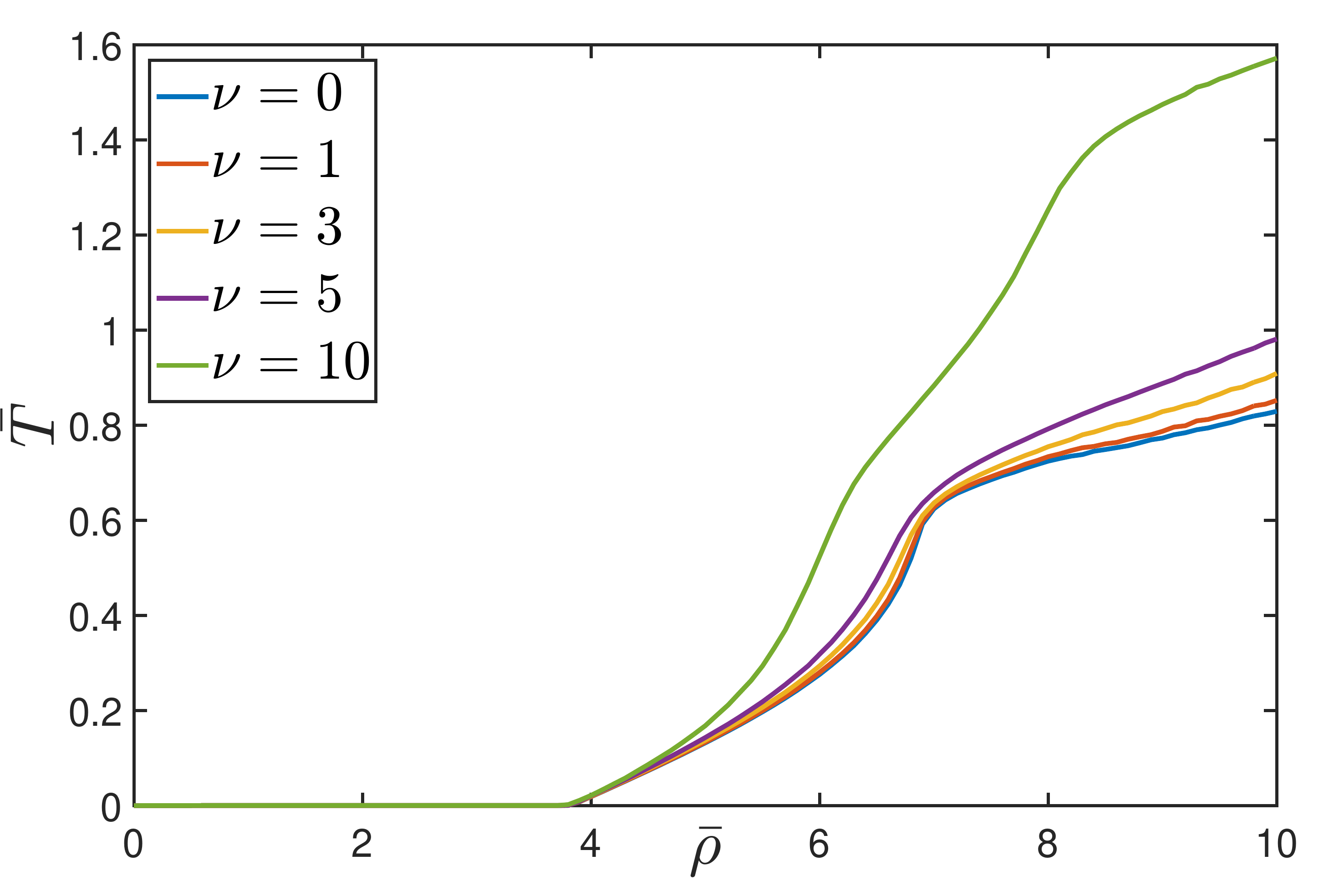}
  \caption{Evolution of the computed minimum upper-bound on the minimum stability-preserving minimum dwell-time using Theorem \ref{th:minDT} for the system \eqref{eq:mainsyst}-\eqref{eq:ex1} with $|\dot{\rho}|\le\nu$ using an SOS approach with polynomials of degree 4.}\label{fig:2}
\end{figure}
\subsection{Example 2}

Let us consider now the system \eqref{eq:mainsyst} with the matrix considered in \cite[p. 55]{Wu:95}:
  \begin{equation}\label{eq:ex2}
  A(\rho)=\begin{bmatrix}
   3/4 &\ & 2 &\ & \rho_1 &\ & \rho_2\\
    0 &\ & 1/2 &\ & -\rho_2 &\ & \rho_1\\
  -3\upsilon\rho_1/4   &\ & \upsilon\left(\rho_2-2\rho_1\right) &\ & -\upsilon &\ & 0\\
  -3\upsilon\rho_2/4    &\ & \upsilon\left(\rho_1-2\rho_2\right)  &\ & 0 &\ & -\upsilon
      \end{bmatrix}
\end{equation}
where $\upsilon=15/4$ and $\rho\in\mathcal{P}=\{z\in\mathbb{R}^2:||z||_2=1\}$. It has been shown in \cite{Wu:95} that this system is not quadratically stable but was proven to be stable under minimum dwell-time equal to 1.7605 when the parameter trajectories are piecewise constant \cite{Briat:15d}. We propose now to quantify the effects of smooth parameter variations between discontinuities. Note, however, that the set $\mathcal{P}$ is not a box as considered along the paper but the next calculations show that this is not a problem since a proper set for the values of the derivative of the parameters can be defined. To this aim, let us define the parametrization $\rho_1(t)=\cos(\beta(t))$ and $\rho_2(t)=\sin(\beta(t))$ where $\beta(t)$ is piecewise differentiable. Differentiating these equalities yields $\dot{\rho}_1(t)=-\dot{\beta}(t)\rho_2(t)$ and $\dot{\rho}_2(t)=\dot{\beta}(t)\rho_1(t)$ where $\dot{\beta}(t)\in[-\nu,\nu]$, $\nu\ge0$, at all times where $\beta(t)$ is differentiable. In this regard, we can consider $\dot{\beta}$ as an additional parameter that enters linearly in the stability conditions and hence the conditions can be checked at the vertices of the interval, that is, for all $\dot{\beta}\in\{-\nu,\nu\}$. Note that, in this case, we have $M=0$, $M'=1$ and $h_1(\rho)=\rho_1^2+\rho_2^2-1$.

\begin{table*}
  \centering
    \caption{Evolution of the computed minimum upper-bound on the minimum dwell-time using Theorem \ref{th:minDT} for the system \eqref{eq:mainsyst}-\eqref{eq:ex2} with $|\dot{\beta}|\le\nu$ using an SOS approach with polynomials of degree $d$. The number of primal/dual variables of the semidefinite program and the preprocessing/solving time are also given.}\label{tab}

 {\footnotesize \begin{tabular}{|c||c|c|c|c|c||c|c|c|}
  \hline
   & $\nu=0$ & $\nu=0.1$ & $\nu=0.3$ & $\nu=0.5$ & $\nu=0.8$ & $\nu=0.9$ & primal/dual vars. & time (sec)\\
  \hline
  \hline
    $d=2$  & 2.7282 & 2.9494 & 3.5578 & 4.6317 & 11.6859 & 26.1883 & 9820/1850 & 20/27\\
    $d=4$  & 1.7605 & 1.8881 & 2.2561 & 2.9466 & 6.4539 & \red{num. err.} & 43300/4620 & 212/935\\
    \hline
  \end{tabular}}
\end{table*}
We now consider the conditions of Theorem \ref{th:minDT} and we get the results gathered in Table \ref{tab} where we can see that, as expected, when $\nu$ increases then the minimum dwell-time has to increase to preserve stability. Using polynomials of higher degree allows to improve the numerical results at the expense of an increase of the computational complexity. As a final comment, it seems important to point out the failure of the semidefinite solver due to too important numerical errors when $d=4$ and $\nu=0.9$.

\section{Conclusion}

Reformulating LPV systems into hybrid systems enabled the derivation of tractable conditions for establishing the stability of LPV systems with piecewise differentiable parameters. These results extend those obtained in \cite{Briat:15d} for piecewise constant parameters through the use of a different, yet connected, approach. It is shown that the obtained stability conditions generalize and unify the well-known quadratic and robust stability conditions in a single formulation. Possible extensions include controller/filter/observer design and performance analysis.

%
%
%


\end{document}